
\documentclass[smallextended]{svjour3}   
\smartqed  
\usepackage{graphicx}
\usepackage{amsmath,amssymb,latexsym,enumerate,mathrsfs}
\usepackage{hyperref}

\DeclareMathOperator*{\argmax}{arg\,max}

\newtheorem{assumption}[theorem]{\bf Assumption}

\def\etal{\mbox{et al.}}
\usepackage{graphicx}
\journalname{}

\begin{document}

\title{Optimal control of diffusion processes pertaining to an opioid epidemic dynamical model with random perturbations}

\titlerunning{Optimal control of diffusion processes pertaining to an opioid epidemic dynamical model} 

\author{Getachew K. Befekadu \and
Quanyan Zhu
}

\institute{Getachew K. Befekadu \at
              Department of Mechanical and Aerospace Engineering\\
              University of Florida - REEF, 1350 N. Poquito Rd, Shalimar, FL 32579, USA. \\
              Tel.: +1 850 833 9350\\
              Fax: +1 850 833 9366\\
              \email{gbefekadu@ufl.edu}    
               \and
           Quanyan Zhu \at
           Department of Electrical and Computer Engineering\\
           Tandon School of Engineering\\
           New York University\\
           5 MetroTech Center, LC 200A, Brooklyn, NY 11201, USA.\\
            \email{quanyan.zhu@nyu.edu}
}

\date{Received: date / Accepted: date}

\maketitle

\begin{abstract}
In this paper, we consider the problem of controlling a diffusion process pertaining to an opioid epidemic dynamical model with random perturbation so as to prevent it from leaving a given bounded open domain. Here, we assume that the random perturbation enters only through the dynamics of the susceptible group in the compartmental model of the opioid epidemic dynamics and, as a result of this, the corresponding diffusion is degenerate, for which we further assume that the associated diffusion operator is hypoelliptic. In particular, we minimize the asymptotic exit rate of such a controlled-diffusion process from the given bounded open domain and we derive the Hamilton-Jacobi-Bellman equation for the corresponding optimal control problem, which is closely related to a nonlinear eigenvalue problem. Finally, we also prove a verification theorem that provides a sufficient condition for optimal control.
\end{abstract}

\keywords{Diffusion processes \and exit probability \and epidemiology \and SIR compartmental model \and prescription drug addiction \and Markov controls \and minimum exit rates \and principal eigenvalues \and optimal control problem}
 \subclass{MSC 35J70 \and 37C20 \and 60J60 \and 93E20 \and 92D25 \and 90C40}
 

\section{Introduction} \label{S1}

The opioid drug-related problem has recently reached crisis levels worthy of declaring a public health emergency in the United States. For example, more than 53,000 people in the United States died from an opioid overdose in 2016 -- more than double the figure in 2010 -- and the increasing use, misuse and abuse of heroin, fentanyl and other opiates, including prescription drugs, shows no signs of slowing (e.g., see \cite{RudSDS16} and \cite{OASPE15} for additional discussions). In response to this, a number of federal and state agencies throughout the United States have implemented a wide range of opioid-related policies\footnote{Including the Ryan Haight Online Pharmacy Consumer Protection Act of 2008 which prohibited the Internet distribution of controlled substances without a valid prescription \cite{r5}; see also \cite{DowHC16} for CDC guideline for prescribing opioids for chronic pain -- United States, 2016.} that are primarily aimed at curbing prescription opioid abuse, establishing guidelines to prevent inappropriate prescribing practices, developing abuse deterrents or preventing drug diversion mechanisms \cite{r1}, \cite{r2} and \cite{r3}. On the other hand, only a few studies have been reported on the need for effective intervention strategies, based on mathematical optimal control theory of epidemiology for infectious diseases, with the intent of better understanding the dynamics of the current serious opioid epidemic (e.g., see \cite{r6}, \cite{r7} and \cite{BefZ18} in context of exploring the dynamics of drug abuse epidemics, focusing on the interplay between the different opioid user groups and the process of rehabilitation and treatment from addiction; see also \cite{r8}, \cite{r9} or \cite{r12} for additional studies, but in the context of heroin epidemics that resembling the classic susceptible-infected-recovered (SIR) model, based on the work of \cite{r13}). Here, we would like to point out that the roots to opioid crisis are complex and tangled with social and political issues; and therefore, only systemic research and evidence-based strategies can identify the most effective ways for intervention of the current opioid crisis.

In this paper, we consider optimal control of an opioid epidemic dynamical model, when there is a random perturbation that enters through the dynamics of the susceptible group in the compartmental model of the opioid epidemic dynamics. Note that the random noise enters only through a particular subsystem in the compartmental model, and then its effect is subsequently propagated to the other subsystems. As a consequence, the corresponding diffusion is degenerate, for which we also assume that the associated diffusion operator is hypoelliptic, where such a hypoellipticity assumption implies a strong accessibility property of controllable nonlinear systems that are driven by white noise (e.g., see \cite{SusJu72} concerning the controllability of nonlinear systems, which is closely related to \cite{StrVa72} and \cite{IchKu74}; see also \cite[Section~3]{Ell73} and \cite{Ama79} for additional discussions from the view point of the control theory). Here, our main objective is to prevent the controlled-diffusion process pertaining to the randomly perturbed opioid epidemic dynamics from leaving a given bounded open domain. To this end, we minimize the asymptotic exit rate (as opposed to maximizing the mean exit-time) with which the controlled-diffusion process exits from the given bounded open domain, and we further derive the Hamilton-Jacobi-Bellman (HJB) equation for the corresponding optimal control problem, which is also closely related to a nonlinear eigenvalue problem. Moreover, we also prove a verification theorem that provides a sufficient condition for the solution of optimal control.

The remainder of this paper is organized as follows. In Section~\ref{S2}, we present the problem formulation for optimal control of a diffusion process pertaining to an opioid epidemic dynamical model with random perturbation. The problem we focus on is to minimize the asymptotic exit rate with which the controlled-diffusion process exits from the given bounded open domain. In Section~\ref{S3}, we provide our main results -- where we derive the HJB equation for the corresponding optimal control problem. In this section, we also provide a verification theorem for the optimal control. Finally, Section~\ref{S4} provides further remarks.
 
\section{Problem formulation} \label{S2}
In this section, we present the problem formulation for optimal control of a diffusion process pertaining to an opioid epidemic dynamical model with random perturbation. In particular, the problem we focus on is to minimize the asymptotic exit rate with which the controlled-diffusion process exits from the given bounded open domain and we further estabilish a connection with a nonlinear eigenvalue problem

\subsection{Mathematical model} \label{S2(1)}
In this subsection, we consider an opioid epidemic dynamical model that describes the interplay between regular prescription opioid use, addictive use, and the process of rehabilitation and relapsing into opioid drug use (e.g., see \cite{r6} for a detailed discussion). To this end, we introduce the following population groups
\begin{enumerate} [(i)]
\item {\it Susceptible group} - $S$: This group in the compartmental model includes those who are susceptible to opioid addiction, but they are not currently using opioids. In the compartmental model, everyone who is not in addiction treatment, already an addict, or using opioids as medically prescribed is classified as ``susceptible".
\item {\it Prescribed user group} - $P$: This group in the compartmental model is composed of individuals who have health related concerns and also have access to opioids through a proper physician's prescription, but they are not addicted to opioids. Members of this group have some inherent tendency of becoming addicted to their prescribed opioids.
\item {\it Addiction user group} - $A$: This group in the compartmental model is composed of people who are addicted to opioids. There are multiple interaction routes to this group in the compartmental model, including those routes that are bypassing the prescribed user group $P$ (see also Fig~\ref{Fig1} that shows the relationships between the different groups).
\item {\it Treatment/rehabilitation} - $R$: This group in the compartmental model contains individuals who are in treatment for their addiction. Here, we include an inherent rate of falling back into addiction as well as a typical process of relapsing due to general availability of the drug. Moreover, we also assume that some of the members from the recovering group who have completed their treatment may return to being susceptible. That is, we assume that successful treatment does not imply permanent immunity to addiction (i.e., in general, an assumption based on the balance of increased risk of addiction verses increased awareness and avoidance).
\end{enumerate}
Then, using the basic remarks made above, we specify the following SIR compartmental model for the opioid epidemic dynamics described by the following four continuous-time differential equations 
\begin{align}
\dot{S}(t) = -\alpha S(t) &- \beta(1-\xi)S(t) A(t) - \beta \xi S(t) P(t) + \epsilon P(t) \notag \\
                        &\quad  + \delta R(t) + \mu(P(t)+R(t)) + \mu^{\ast} A(t), \label{Eq2.1a}
\end{align}
\begin{align}
\dot{P}(t) &= \alpha S(t) - (\epsilon + \gamma + \mu) P(t), \label{Eq2.1b}
\end{align}
\begin{align}
\dot{A}(t) =  \gamma P(t) + \sigma R(t) &+ \beta(1 - \xi) S(t) A(t) + \beta \xi S(t) P(t) \notag \\
                                                    & + \nu R(t) A(t) - (\zeta + \mu^{\ast}) A(t) \label{Eq2.1c}
\end{align}
and
\begin{align}
\dot{R}(t) &= \zeta A(t) - \mu R(t) A(t) - (\delta + \sigma + \mu) R(t), \label{Eq2.1d} 
\end{align}
where the normalized overall population is assumed to be constant, i.e., $1= A(t)+S(t)+R(t)+P(t)$, since the number of mortality due to opioid-related overdose is very small, when compared to the change in the total population numbers in the short term. Moreover, the followings are brief description for the parameters in the above system of equations, i.e., the system parameters in Equations~\eqref{Eq2.1a}--\eqref{Eq2.1d},
{\em
\begin{itemize}
\item $\alpha S(t)$: the rate at which people are prescribed opioids.
\item $\beta$: the total probability of becoming addicted to opioids other than by prescription.
\item $\beta(1-\xi)$: the proportion of $\beta$ caused by black market drugs or other addicts.
\item $\beta \xi$: the rate at which the non-prescribed, susceptible population begins abusing opioids due to the accessibility of extra prescription opioids, e.g., new addicts got the drug from a friend or relative's prescription.
\item $\epsilon$: the rate at which people come back to the susceptible group after being prescribed opioids.
\item $\delta$: the rate at which people come back to the susceptible group after successfully finishing treatment. Despite having completed rehabilitation, we assume people are susceptible to addiction for life.
\item $\mu$: the natural death rate.
\item $\mu^{\ast}$: the (enhanced) death rate for addicts ($\mu$ plus overdose rate).
\item $\gamma$: the rate at which the prescribed opioid users fall into addiction.
\item $\zeta$: the rate at which addicted/dependent opioid users enter the treatment/ rehabilitation process.
\item $\nu$: the rate at which users during the treatment fall back into addictive drug use due to the availability of prescribed painkillers from friends or relatives.
\end{itemize}}

\begin{figure}[ht]
\includegraphics[width=3.5 in]{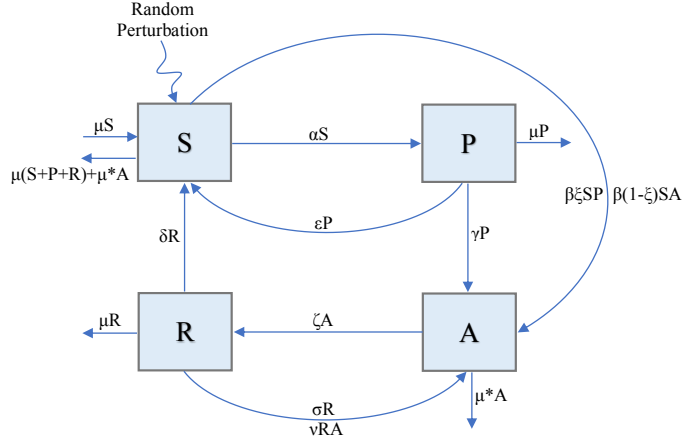}
\caption {\scriptsize A block diagram showing the relationships between the different groups in the compartmental model of opioid addiction with random perturbation (cf. Battista \etal \,\cite{r6})} \label{Fig1}
\end{figure}

Note that the normalized overall population is assumed constant (which is set to unity). Then, with $P(t) = 1 - S(t) - A(t) - R(t)$, we can reduce the above system of equations in Equations~\eqref{Eq2.1a}--\eqref{Eq2.1d} as follows
\begin{eqnarray}
\left.\begin{array}{l}
\dot{S}(t) = -\alpha S(t) -\beta(1-\xi)S(t) A(t) - \beta \xi S(t) (1 - S(t) - A(t) - R(t))  \\
\hspace{0.9in} + (\epsilon + \mu)(1 - S(t) - A(t) - R(t)) + (\delta + \mu) R(t) + \mu^{\ast} A(t)\\
\dot{A}(t) =  \gamma (1 - S(t) - A(t) - R(t)) + \sigma R(t) + \beta(1 - \xi) S(t) A(t) \\
 \hspace{0.9in} + \beta \xi S(t) (1 - S(t) - A(t) - R(t))  + \nu R A - (\zeta + \mu^{\ast})A(t) \\
\dot{R}(t) = \zeta A(t) - \mu R(t) A(t) - (\delta + \sigma + \mu) R(t)
\end{array}\right\}. \label{Eq2.2}
\end{eqnarray}
In order to facilitate our presentation, we adopt the following change of variables: $S \rightarrow x_1$, $A \rightarrow x_2$ and $R \rightarrow x_3$. Then, the system of equations in Equation~\eqref{Eq2.2} can be further rewritten as follows
\begin{eqnarray}
\left.\begin{array}{l}
\dot{x}_1(t) = f_1(x_1(t), x_2(t), x_3(t))\\
\dot{x}_2(t) = f_2(x_1(t), x_2(t), x_3(t)) \\
\dot{x}_3(t) = f_3(x_2(t), x_3(t))
\end{array}\right\} \label{Eq2.2b}
\end{eqnarray}
where the functions $f_1$, $f_2$ and $f_3$ are given by
\begin{align*}
& f_1(x_1(t), x_2(t), x_3(t)) \\
 &\quad \quad  =  -\alpha x_1(t) -\beta(1-\xi)x_1(t) x_2(t) - \beta \xi x_1(t) (1 - x_1(t) - x_2(t) - x_3(t)) \\
                                &\quad \quad\quad  + (\epsilon + \mu)(1 - x_1(t) - x_2(t) - x_3(t)) + (\delta + \mu) x_3(t) + \mu^{\ast} x_2(t),
\end{align*}
\begin{align*}
f_2(x_1(t), x_2(t), x_3(t)) =& \gamma (1 - x_1(t) - x_2(t) - x_3(t)) + \sigma x_3(t) + \beta(1 - \xi) x_1(t) x_2(t)  \\
                                 & \quad + \beta \xi x_1(t) (1 - x_1(t) - x_2(t) - x_3(t)) + \nu x_3(t) x_2(t) \\
                                 & \quad \quad \quad - (\zeta + \mu^{\ast}) x_2(t)
\end{align*}
and
\begin{align*}
f_3(x_2(t), x_3(t)) = \zeta x_2(t) - \mu x_3(t) x_2(t) - (\delta + \sigma + \mu) x_3(t),
\end{align*}
respectively.

In what follows, we assume that a random noise enters only through the dynamics of the susceptible group in Equation~\eqref{Eq2.2} and is then subsequently propagated to the other groups in the compartmental model (see also in Fig~\ref{Fig1}). To this end, we consider the corresponding system of stochastic differential equations (SDEs), i.e.,
\begin{eqnarray}
\left.\begin{array}{l}
dX_1(t) = f_1(X_1(t), X_2(t), X_3(t)) dt + \hat{\sigma}(X_1(t), X_2(t), X_3(t)) dW(t)\\
dX_2(t) = f_2(X_1(t), X_2(t), X_3(t)) dt \\
dX_3(t) = f_3(X_2(t), X_3(t)) dt
\end{array}\right\} \label{Eq2.6}
\end{eqnarray}
where $\bigl(W(t)\bigr)_{t \ge 0}$ is a one-dimensional Brownian motion, $\bigl(X_1(t), X_2(t), X_3(t)\bigr)_{t \ge 0}$ being an $\mathbb{R}^3$-valued degenerate diffusion process, and $\hat{\sigma}$ and $\hat{\sigma}^{-1}$ are assumed to be bounded functions. Moreover, if we denote by a bold letter a quantity in $\mathbb{R}^3$, for example, the solution in Equation~\eqref{Eq2.6} is denoted by $\bigl(\mathbf{X}(t)\bigr)_{t \ge 0} = \bigl(X_1(t), X_2(t), X_3(t)\bigr)_{t\ge 0}$, then we can rewrite Equation~\eqref{Eq2.6} as follows
\begin{align}
d \mathbf{X}(t) = \mathbf{F} (\mathbf{X}(t)) dt + B \hat{\sigma}(\mathbf{X}(t)) dW(t), \label{Eq2.7}
\end{align}
where $\mathbf{F} = \bigl[f_1, f_2, f_3\bigr]^T$ is an $\mathbb{R}^3$-valued function and $B$ stands for a column vector that embeds $\mathbb{R}$ into $\mathbb{R}^3$, i.e., $B = [1, 0, 0]^T$. Note that the corresponding degenerate elliptic operator for the diffusion process $\mathbf{X}(t)$ is given by
\begin{align}
\mathcal{L} (\cdot) (\mathbf{x}) = \frac{1}{2} \operatorname{tr}\Bigl \{a(\mathbf{x}) D_{x_1}^2 (\cdot) \Bigr\} + \sum\nolimits_{i=1}^3 f_{i}(\mathbf{x}) D_{x_i} (\cdot),  \label{Eq2.8}
\end{align}
where $a(\mathbf{x})=\hat{\sigma}(\mathbf{x})\,\hat{\sigma}^T(\mathbf{x})$, $D_{x_i}$ and $D_{x_1}^2$ (with $D_{x_1}^2 = \bigl({\partial^2 }/{\partial x_1 \partial x_1} \bigr)$) are the gradient and the Hessian (w.r.t. the variable $x_i$, for $i \in \{1,2,3\}$), respectively.

Let $D \subset \mathbb{R}^3$ be a given bounded open domain, with smooth boundary $\partial D$ (i.e., $\partial D$ is a manifold of class $C^2$), and let us denote by $C^{\infty}(D)$ the spaces of infinitely differentiable functions on $D$.

The following statements are standing assumptions that hold throughout the paper.
\begin{assumption} \label{AS1} ~\\\vspace{-3mm}
\begin{enumerate} [(a)]
\item The functions $\hat{\sigma}(\mathbf{x})$ and $\hat{\sigma}^{-1}(\mathbf{x})$ are bounded $C^{\infty}(\mathbb{R}^3\bigr)$-functions, with bounded first derivatives. Moreover, the least eigenvalue of $a(\mathbf{x})$ is uniformly bounded away from zero, i.e.,
\begin{align*}
  \mathbf{y}^Ta(\mathbf{x}) \mathbf{y} \ge \lambda \bigl\vert \mathbf{y} \bigr\vert^2, \quad \forall \mathbf{x}, \mathbf{y} \in \mathbb{R}^{3}, \quad \forall t \ge 0,
\end{align*}
for some $\lambda > 0$.
\item The operator in Equation~\eqref{Eq2.8} is hypoelliptic in $C^{\infty}(D)$ (e.g., see \cite{Hor67} or \cite{Ell73}).
\end{enumerate}
\end{assumption}

Note that the hypoellipticity assumption is in general related to a strong accessibility property of controllable nonlinear systems that are driven by white noises (e.g., see \cite{SusJu72} concerning the controllability of nonlinear systems, which is closely related to \cite{StrVa72} and \cite{IchKu74}; see also \cite[Section~3]{Ell73} and \cite[Theorem~2]{Ama79}). That is, the hypoellipticity assumption further implies that the diffusion process $\mathbf{X}(t)$ has a transition probability density with a strong Feller property.

\subsection{Minimum exit rates and principal eigenvalues} \label{S2(2)}
 In this subsection, we consider the following controlled version of SDE in Equation~\eqref{Eq2.7}, with the corresponding controlled-diffusion process $\bigl(\mathbf{X}_{0,\mathbf{x}}^{u}(t) \bigr)_{t \ge 0}$, i.e., 
\begin{align}
d \mathbf{X}_{0,\mathbf{x}}^{u}(t) = \bigl[\mathbf{F} (\mathbf{X}_{0,\mathbf{x}}^{u}(t)) + B u(t) \bigr]dt + B \hat{\sigma}(\mathbf{X}_{0,\mathbf{x}}^{u}(t)) dW(t), \,\, \mathbf{X}_{0,\mathbf{x}}^{u}(0) = \mathbf{x}, \label{Eq2.15}
\end{align}
where $u(\cdot)$ is a measurable control process from a set $\mathcal{U}$ which is $\mathbb{R}$-valued progressively measurable processes (i.e., a family of nonanticipative processes, for all $t > s$, $(W(t)-W(s))$ is independent of $u(r)$ for $r \le s$) and such that
\begin{align*}
\mathbb{E} \int_{0}^{\infty} \vert u(t)\vert^2 dt < \infty.
\end{align*}
Here, our main objective is to minimize the asymptotic exit rate with which the controlled-diffusion process $\mathbf{X}_{0,\mathbf{x}}^{u}(t)$ exits from the given bounded open domain $D$.

In what follows, we specifically consider a stationary Markov control $u(t) = v\bigl(\mathbf{X}_{0,\mathbf{x}}^{v}(t)\bigr) \in \mathcal{U}$, for $t \ge 0$, with some measurable map $v \colon \mathbb{R}^{3} \rightarrow \mathcal{U}$. Then, we suppose that the controlled-SDE in Equation~\eqref{Eq2.15} is composed with an admissible Markov control $v$. Furthermore, let $\tau_{D}$ be the first exit-time for the controlled-diffusion process $\mathbf{X}_{0,\mathbf{x}}^{v}(t)$ from the given bounded domain $D$, i.e., 
\begin{align}
\tau_{D} = \inf \Bigl\{ t > 0 \, \bigl\vert \, \mathbf{X}_{0,\mathbf{x}}^{v}(t) \in \partial D \Bigr\}. \label{Eq-2}
\end{align}
Notice that the extended generator for the controlled-diffusion process $\mathbf{X}_{0,\mathbf{x}}^{v}(t)$ is given by
\begin{eqnarray}
\mathcal{L}_{v} \bigl(\cdot\bigr) \bigl(\mathbf{x}\bigr) = \frac{1}{2} \operatorname{tr}\Bigl \{a(\mathbf{x}) D_{x_1}^2 (\cdot) \Bigr\} + \Bigl\langle \mathbf{F}(\mathbf{x}) + Bv(\mathbf{x}), D_{\mathbf{x}}(\cdot) \Bigr\rangle, \label{Eq-4}
\end{eqnarray}
where $D_{\mathbf{x}}(\cdot)$ denotes the gradient operator with respect to $\mathbf{x}$ (i.e., $D_{\mathbf{x}}(\cdot) \equiv [D_{x_1}(\cdot),\,D_{x_2}(\cdot),\,D_{x_3}(\cdot)]^T$).

Next, let us consider the following eigenvalue problem
\begin{align}
\left.\begin{array}{c}
  - \mathcal{L}_{v} \psi_{v} \bigl(\mathbf{x}\bigr) = \lambda_{v} \psi_{v}\bigl(\mathbf{x}\bigr) \quad \text{in} \quad D \quad \vspace{2mm} \\
  \quad \psi_{v}\bigl(\mathbf{x}\bigr) = 0 \quad \text{on} \quad \partial D  \label{Eq-7}
  \end{array}\right\} 
\end{align}
where the extended generator $\mathcal{L}_{v}$ is given in the above Equation~\eqref{Eq-4}. 

In the following section, i.e., Section~\ref{S3}, using Theorems~1.1, 1.2 and 1.4 from \cite{QuaSi08} (see also \cite[Proposition~3.2]{BefA15a}), we provide a condition for the existence of a unique principal eigenvalue $\lambda_{v} > 0$ and an eigenfunction $\psi_{v} \in W_{loc}^{2,p} \bigl(D\bigr) \cap C\bigl(\bar{D}\bigr)$ pairs for the eigenvalue problem in Equation~\eqref{Eq-7}, with zero boundary condition on $\partial D$. Notice that such an eigenvalue $\lambda_{v}$ is also related to the minimum asymptotic exit rate with which the controlled-diffusion process $\mathbf{X}_{0,\mathbf{x}}^{v}(t)$ exits from the bounded domain $D$, when the controlled-SDE in Equation~\eqref{Eq2.15} is composed with an admissible Markov control $v$.

\section{Main results} \label{S3}
In this section, we present our main results (i.e., Propositions~\ref{P-1} and \ref{P-2}) that characterize admissible solutions to the optimal control problem in Equation~\eqref{Eq-7}.

The following proposition establishes a connection between the minimum exit rate and with that of the principal eigenvalue for the extended generator $\mathcal{L}_{v}$ in Equation~\eqref{Eq-4}.

\begin{proposition} \label{P-1}
Suppose that an admissible Markov control $v$ is given. Then, the principal eigenvalue $\lambda_{v} $ for the extended generator $\mathcal{L}_{v} $, with zero boundary condition on $\partial D$, is given by 
\begin{align}
\lambda_{v} = - \limsup_{t \rightarrow \infty} \frac{1} {t} \log \mathbb{P}_{\mathbf{x}} \bigl\{\tau_{D} > t \bigr\},  \label{Eq-8}
\end{align}
where $\tau_{D}$ is the first exit-time for the controlled-diffusion process $\mathbf{X}_{0,\mathbf{x}}^{v}(t)$ from the given bounded domain $D$, i.e., $\tau_{D} = \inf \bigl\{ t > 0 \, \bigl\vert \, \mathbf{X}_{0,\mathbf{x}}^{v}(t) \in \partial D \bigr\}$; while the probability $\mathbb{P}_{\mathbf{x}}\bigl\{\cdot\bigr\}$ in Equation~\eqref{Eq-8} is conditioned on the initial condition $\mathbf{x} \in D$ as well as on the admissible Markov control $v\bigl(\mathbf{X}_{0,\mathbf{x}}^{v}(t)\bigr)$, for $t \in [0,\, \tau_{D})$.
\end{proposition}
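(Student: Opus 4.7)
The plan is to sandwich the exponential decay rate of $\mathbb{P}_{\mathbf{x}}\{\tau_D > t\}$ between $e^{-\lambda_v t}$ from both sides. The lower bound on the probability (which will give $-\limsup \le \lambda_v$) comes from applying It\^o's formula to the eigenfunction $\psi_v$ itself; the matching upper bound (which will give $-\limsup \ge \lambda_v$) comes from repeating the same argument on a slightly enlarged domain $D'$ with $\bar D \subset D'$, where the principal eigenfunction is bounded below away from zero on $\bar D$, followed by letting $D' \downarrow D$ and using the monotone convergence $\lambda_{D'} \uparrow \lambda_v$.

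For the lower bound on the probability, let $(\lambda_v, \psi_v)$ be the principal pair from \eqref{Eq-7}, with $\psi_v > 0$ in $D$, $\psi_v = 0$ on $\partial D$, $\psi_v \in W^{2,p}_{\mathrm{loc}}(D) \cap C(\bar D)$, and set $M := \|\psi_v\|_{C(\bar D)}$. I apply It\^o's formula to $Y_t := e^{\lambda_v t}\psi_v(\mathbf{X}_{0,\mathbf{x}}^v(t))$: the drift equals $e^{\lambda_v t}(\lambda_v \psi_v + \mathcal{L}_v \psi_v) = 0$ by \eqref{Eq-7}, so $(Y_{t \wedge \tau_D})_{t \ge 0}$ is a local martingale, and hypoellipticity (Assumption~\ref{AS1}(b)) together with standard localization over relatively compact subdomains promotes it to a true martingale on $[0,\tau_D]$. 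Optional stopping at $t \wedge \tau_D$, with the Dirichlet condition $\psi_v|_{\partial D} = 0$ killing the contribution from $\{\tau_D \le t\}$, yields
\begin{align*}
\psi_v(\mathbf{x}) \,=\, \mathbb{E}_{\mathbf{x}}\Bigl[e^{\lambda_v t}\,\psi_v(\mathbf{X}_{0,\mathbf{x}}^v(t))\,\mathbf{1}_{\{\tau_D > t\}}\Bigr] \,\le\, M\, e^{\lambda_v t}\,\mathbb{P}_{\mathbf{x}}\{\tau_D > t\},
\end{align*}
so $\mathbb{P}_{\mathbf{x}}\{\tau_D > t\} \ge (\psi_v(\mathbf{x})/M)\,e^{-\lambda_v t}$; taking $\tfrac{1}{t}\log$ and passing to $\limsup$ delivers $-\limsup_{t \to \infty}\tfrac{1}{t}\log \mathbb{P}_{\mathbf{x}}\{\tau_D > t\} \le \lambda_v$.

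For the matching upper bound, fix a smooth bounded $D'$ with $\bar D \subset D'$, and let $(\lambda_{D'}, \psi_{D'})$ denote the principal Dirichlet pair for $\mathcal{L}_v$ on $D'$, supplied again by the Quaas--Sirakov theory. Strict positivity of $\psi_{D'}$ throughout $D'$ (strong maximum principle under hypoellipticity) gives $c_{D'} := \inf_{\bar D}\psi_{D'} > 0$. The same It\^o/optional-stopping computation applied to $e^{\lambda_{D'} t}\psi_{D'}(\mathbf{X}(t))$ on $[0, t \wedge \tau_D]$, discarding the nonnegative contribution from $\{\tau_D \le t\}$, produces
\begin{align*}
\psi_{D'}(\mathbf{x}) \,\ge\, e^{\lambda_{D'} t}\,\mathbb{E}_{\mathbf{x}}\Bigl[\psi_{D'}(\mathbf{X}(t))\,\mathbf{1}_{\{\tau_D > t\}}\Bigr] \,\ge\, c_{D'}\, e^{\lambda_{D'} t}\,\mathbb{P}_{\mathbf{x}}\{\tau_D > t\},
\end{align*}
hence $\mathbb{P}_{\mathbf{x}}\{\tau_D > t\} \le (\psi_{D'}(\mathbf{x})/c_{D'})\,e^{-\lambda_{D'} t}$ and $\limsup_{t\to\infty}\tfrac{1}{t}\log\mathbb{P}_{\mathbf{x}}\{\tau_D>t\} \le -\lambda_{D'}$. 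Letting $D' \downarrow D$ through a nested smooth sequence and using the monotonicity/continuity of the principal Dirichlet eigenvalue under domain inclusion ($\lambda_{D'} \uparrow \lambda_v$) gives $-\limsup_{t\to\infty}\tfrac{1}{t}\log\mathbb{P}_{\mathbf{x}}\{\tau_D>t\} \ge \lambda_v$, and combining with the previous paragraph produces \eqref{Eq-8}.

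The chief obstacle is precisely this upper bound: $\psi_v$ vanishes on $\partial D$, so the direct Dynkin identity on $D$ alone cannot yield it. One therefore must either pass to a slightly larger domain as above -- paying the cost of invoking continuous domain dependence of the principal eigenvalue -- or invoke Krein--Rutman / spectral theory for the Dirichlet semigroup $T_t f(\mathbf{x}) := \mathbb{E}_{\mathbf{x}}[f(\mathbf{X}(t))\mathbf{1}_{\{\tau_D > t\}}]$, whose compactness and irreducibility on $L^2(D)$ would follow from hypoellipticity and which would in fact deliver the sharper asymptotic $\mathbb{P}_{\mathbf{x}}\{\tau_D > t\} \sim C(\mathbf{x}) e^{-\lambda_v t}$. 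For the statement of Proposition~\ref{P-1} the outer-domain route is the most economical, since the needed existence and monotonicity of principal eigenvalues are already part of the machinery cited in Section~\ref{S2(2)}.
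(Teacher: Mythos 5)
Your proposal is correct and follows essentially the same route as the paper's proof: the lower bound on $\mathbb{P}_{\mathbf{x}}\{\tau_D>t\}$ via Krylov/It\^o applied to $e^{\lambda_v t}\psi_v$ with localization on subdomains increasing to $D$, and the upper bound via the principal pair on an enlarged domain $D'\supset\bar D$ (the paper's $B_k$), whose eigenfunction is bounded below on $\bar D$, followed by shrinking $D'$ to $\bar D$ and invoking Proposition~4.10 of Quaas--Sirakov for $\lambda_{D'}\to\lambda_v$. The only cosmetic difference is that you stop at $t\wedge\tau_D$ and discard the boundary term, whereas the paper stops at $\tau_{B_k}$ and uses $\mathbb{P}_{\mathbf{x}}\{\tau_{B_k}>t\}\ge\mathbb{P}_{\mathbf{x}}\{\tau_D>t\}$; both are equivalent.
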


\begin{proof}
For $\delta > 0$, let $D^{\delta} \subset D$ (with $D^{\delta} \cup \partial D^{\delta} \subset D$) be a bounded domain with smooth boundary, increasing to $D$ as $\delta \rightarrow 0$. Let 
\begin{align*}
\tau_{D^{\delta}} = \inf \Bigl\{ t > 0 \, \bigl\vert \, \mathbf{X}_{0,\mathbf{x}}^{v}(t) \in \partial D^{\delta} \Bigr\}.
\end{align*}
Then, applying Krylov's extension of the It\^{o}'s formula valid for any continuous functions from $W_{loc}^{2,p} \bigl(D\bigr)$, with $p \ge 2$ (e.g., see \cite[Chapter~2]{Bor89}; cf. \cite[Section~10, pp.~121--128]{Kry80}) and the optional sampling theorem
\begin{align*}
 \psi_{v} \bigl(\hat{\mathbf{x}}\bigr) &= \mathbb{E}_{\mathbf{x}} \Bigl\{\exp\bigl( \lambda_{v} (t \wedge \tau_{D^\delta}) \bigr) \psi_{v} \bigl(\mathbf{X}_{0,\mathbf{x}}^{v}(t \wedge \tau_{D^\delta})\bigr) \Bigr\},
\end{align*}
for some $\hat{\mathbf{x}} \in D$.

Letting $\delta \rightarrow 0$, then we have $\tau_{D^\delta} \rightarrow \tau_{D}$, {\it almost surely}, and
\begin{align*}
 \psi_{v} \bigl(\hat{\mathbf{x}}\bigr) 
 &= \mathbb{E}_{\mathbf{x}} \Biggl\{\exp\bigl( \lambda_{v} (t \wedge \tau_{D^\delta}) \bigr) \psi_{v} \Bigl(\mathbf{X}_{0,\mathbf{x}}^{v}(t \wedge \tau_{D^\delta})\Bigr) \Biggr\} \\
&= \mathbb{E}_{\mathbf{x}} \Biggl\{\exp\bigl( \lambda_{v} t \bigr) \,\psi_{v} \Bigl(\mathbf{X}_{0,\mathbf{x}}^{v}(t)\Bigr) \mathbf{1}\Bigl\{\tau_{D^\delta} > t \Bigr\} \Biggr\} \\
  & \le \Bigl\Vert \psi_{v} \Bigl(\mathbf{X}_{0,\mathbf{x}}^{v}(t)\Bigr) \Bigr\Vert_{\infty}  \exp\bigl( \lambda_{v} \, t\bigr)  \mathbb{P}_{\mathbf{x}}\Bigl\{\tau_{D^\delta} > t\Bigr\}.
\end{align*}
If we take the logarithm and divide both sides by $t$, then, further let $t \rightarrow \infty$, we have
\begin{align}
\lambda_{v} & \ge - \liminf_{t \rightarrow \infty} \frac{1} {t} \log \mathbb{P}_{\mathbf{x}} \Bigl\{\tau_{D} > t \Bigr\} \ge - \limsup_{t \rightarrow \infty} \frac{1} {t} \log \mathbb{P}_{\mathbf{x}} \Bigl\{\tau_{D} > t \Bigr\},  \label{Eq-9}
\end{align}
with $\delta \rightarrow 0$ (since $\tau_{D}^\delta \rightarrow \tau_{D}$, when $\delta \rightarrow 0$).

On the other hand, let $B_k \supset \bar{D} \equiv D \cup \partial D$ be an open domain with smooth boundary and let $\tau_{B_k}$ be the first exit-time for the controlled-diffusion process $\mathbf{X}_{0,\mathbf{x}}^{v}(t)$ from the domain $B_k$. Furthermore, let $ \psi_{v,B_k}$ and $\lambda_{v,B_k}$ be the principal eigenfunction-eigenvalue pairs for the eigenvalue problem of $\mathcal{L}_{v}$ on $B_k$, with $\psi_{v,B_k} \bigl(\hat{\mathbf{x}}\bigr) = 1$, for some $\hat{\mathbf{x}} \in D$.

Then, we have the following
\begin{align*}
& \psi_{v,B_k} \Bigl(\hat{\mathbf{x}}\Bigr) = \mathbb{E}_{\mathbf{x}} \Bigl\{\exp\bigl(\lambda_{v,B_k} t \bigr) \psi_{v,B_k} \Bigl(\mathbf{X}_{0,\mathbf{x}}^{v}(t)\Bigr) \mathbf{1}\Bigl\{\tau_{B_k} > t\Bigr\} \Bigr\} \\
  &\quad\quad\quad\quad \ge  \inf_{\mathbf{y} \in D}\Bigl\vert \psi_{v,B_k}\bigl(\mathbf{y} \bigr) \Bigr\vert  \exp\bigl(\lambda_{v,B_k}\, t\bigr)  \mathbb{P}_{\mathbf{x}} \Bigl\{\tau_{B_k} > t\Bigr\}.
\end{align*}
Thus, from Equation~\eqref{Eq-9}, we have the following 
\begin{align*}
\lambda_{v,B_k} & \le - \limsup_{t \rightarrow \infty} \frac{1} {t} \log \mathbb{P}_{\mathbf{x}} \Bigl\{\tau_{D} > t \Bigr\} \le - \liminf_{t \rightarrow \infty} \frac{1} {t} \log \mathbb{P}_{\mathbf{x}} \Bigl\{\tau_{D} > t \Bigr\}.
\end{align*}
Then, using Proposition~4.10 of \cite{QuaSi08}, we have $\lambda_{v, B_k} \rightarrow \lambda_{v} $ and $\tau_{B_k} \rightarrow \tau_{D}$ as $B_k \rightarrow \bar{D}$. This completes the proof of Proposition~\ref{P-1}.
\end{proof}

\begin{remark} \label{R1}
In order to confine the controlled-diffusion process $\mathbf{X}_{0,\mathbf{x}}^{v}(t)$ for a longer duration in a given bounded domain $D$, a standard approach is to maximize the mean exit-time, i.e., $\max_{u \in \mathcal{U}}\,\mathbb{E}_{\mathbf{x}} \bigl\{\tau_{D} \bigr\}$, from the bounded domain $D$. Note that, in general, it is difficult to get effective information about a minimum exit probability and, at the same time, a set of admissible Markov controls in this way. On the other hand, we also observe that a more suitable objective would be to minimize the asymptotic exit rate with which the controlled-diffusion process $\mathbf{X}_{0,\mathbf{x}}^{v}(t)$  exits from the bounded domain $D$, where we argued that the corresponding optimal control problem can be closely identified with a nonlinear eigenvalue problem.
\end{remark}

In what follows, let us define the following HJB equation
\begin{align}
&\mathcal{L}_{u} \bigl(\cdot\bigr) \bigl(\mathbf{x}, u\bigr) = \frac{1}{2} \operatorname{tr}\Bigl \{a(\mathbf{x}) D_{x_1}^2 (\cdot) \Bigr\} + \Bigl\langle \mathbf{F}(\mathbf{x}) + Bu, D_{\mathbf{x}}(\cdot) \Bigr\rangle, \label{Eq-10}
\end{align}
with $D_{\mathbf{x}}(\cdot) = [D_{x_1}(\cdot),\,D_{x_2}(\cdot),\,D_{x_3}(\cdot)]^T$.

Note that we can also associate the above HJB equation with the following optimal control problem
\begin{align}
\max_{u \in \mathbb{R}} \Bigl \{ \mathcal{L}_{u}\psi\bigl(\mathbf{x}, u\bigr) + \lambda \psi\bigl(\mathbf{x}\bigr) \Bigr\}. \label{Eq-11}
\end{align}

Then, we have the following result that provides a sufficient condition for admissible optimal Markov control.
\begin{proposition} \label{P-2}
There exist a unique $\lambda^{\ast} > 0$ (which is the minimum exit rate) and $\psi^{\ast} \in C^2(D) \cap C(\bar{D})$, with $\psi^{\ast} > 0$ on $D$, that satisfies the optimal control problem in Equation~\eqref{Eq-11}. Moreover, the admissible Markov control $v^{\ast}$ is optimal if and only if $v^{\ast}$ is a measurable selector for 
\begin{align}
\argmax \Bigl\{\mathcal{L}_{u} \psi^{\ast} \bigl(\mathbf{x}, \,\cdot \,\bigr) \Bigr\}, \,\, \mathbf{x} \in D. \label{Eq-12}
\end{align}
\end{proposition}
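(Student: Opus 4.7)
The plan is to split the proposition into two parts: first, establish existence and uniqueness of a principal eigenpair $(\lambda^\ast, \psi^\ast)$ for the nonlinear HJB operator $\mathbf{x} \mapsto \max_{u\in\mathbb{R}} \mathcal{L}_u \psi\bigl(\mathbf{x}, u\bigr)$; second, use this eigenpair to verify optimality via comparison with the linear eigenvalue characterization of Proposition~\ref{P-1}. For existence, the natural route is to invoke the nonlinear principal eigenvalue theory of Quaas--Sirakov \cite{QuaSi08} already used in the paper: the Bellman operator $G(\mathbf{x}, D\psi, D_{x_1}^2\psi) = \max_u \mathcal{L}_u \psi(\mathbf{x}, u)$ is a supremum of expressions that are affine in $(D\psi, D_{x_1}^2\psi)$, hence convex in those arguments, and $a(\mathbf{x})$ satisfies the boundedness/nondegeneracy clause of Assumption~\ref{AS1}. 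This produces a unique $\lambda^\ast > 0$ together with a positive $W_{loc}^{2,p}(D) \cap C(\bar{D})$ eigenfunction $\psi^\ast$; the hypoellipticity clause of Assumption~\ref{AS1} then upgrades $\psi^\ast$ to $C^\infty(D)$, giving the $C^2(D) \cap C(\bar{D})$ regularity required in the statement. Positivity of $\lambda^\ast$ is inherited from Proposition~\ref{P-1}, since the controlled-diffusion exits the bounded domain $D$ in finite time almost surely under any admissible $v$.

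With the eigenpair in hand, fix a Borel-measurable selector $v^\ast(\mathbf{x}) \in \argmax\bigl\{\mathcal{L}_u \psi^\ast(\mathbf{x}, \cdot)\bigr\}$; existence of such a selector follows from a Kuratowski--Ryll-Nardzewski / Filippov type measurable selection theorem, since the argmax multifunction has closed values and jointly measurable graph (using continuity of $u \mapsto \mathcal{L}_u \psi^\ast$ in $u$ and Borel measurability in $\mathbf{x}$). Substituting $v^\ast$ into the HJB equation~\eqref{Eq-11} yields the linear Dirichlet eigenvalue problem $\mathcal{L}_{v^\ast} \psi^\ast + \lambda^\ast \psi^\ast = 0$ in $D$, $\psi^\ast = 0$ on $\partial D$, i.e.\ $(\lambda^\ast, \psi^\ast)$ is precisely the principal eigenpair of the linear operator $\mathcal{L}_{v^\ast}$. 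Proposition~\ref{P-1} therefore identifies $\lambda^\ast = \lambda_{v^\ast}$ as the asymptotic exit rate under the Markov control $v^\ast$, so any such measurable selector is optimal.

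The comparison half rests on the pointwise inequality $\mathcal{L}_{v}\psi^\ast(\mathbf{x}) \le \max_{u} \mathcal{L}_u \psi^\ast(\mathbf{x}, u) = -\lambda^\ast \psi^\ast(\mathbf{x})$, valid for every admissible Markov control $v$; thus the positive $\psi^\ast$, vanishing on $\partial D$, satisfies $(\mathcal{L}_v + \lambda^\ast)\psi^\ast \le 0$ in $D$. The Berestycki--Nirenberg--Varadhan / Collatz--Wielandt characterization of the principal Dirichlet eigenvalue (as developed in \cite{QuaSi08}) then yields $\lambda_v \ge \lambda^\ast$; combining this with Proposition~\ref{P-1} forces the asymptotic exit rate under $v$ to be at least $\lambda^\ast$, with equality exactly when the pointwise inequality saturates almost everywhere, i.e.\ when $v(\mathbf{x})$ is a measurable selector of the argmax. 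This gives both directions of the ``if and only if''. The step I expect to be most delicate is transporting the Quaas--Sirakov nonlinear eigenvalue machinery to the degenerate operator in Equation~\eqref{Eq2.8}, since their standing hypothesis is uniform ellipticity on the full Hessian, whereas only $D_{x_1}^2$ enters $\mathcal{L}$; a natural workaround is to regularize with $a_\varepsilon(\mathbf{x}) = a(\mathbf{x}) + \varepsilon I$, extract an eigenpair $(\lambda^\ast_\varepsilon, \psi^\ast_\varepsilon)$ by the uniformly elliptic theory, and pass to the limit $\varepsilon \downarrow 0$ using H\"ormander-type subelliptic a priori bounds ensured by the hypoellipticity clause to retain strict positivity and interior smoothness of $\psi^\ast$.
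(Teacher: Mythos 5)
Your proposal is sound at essentially the paper's own level of rigor and leans on the same machinery (Quaas--Sirakov principal-eigenvalue theory plus Proposition~\ref{P-1}), but the verification step is carried out by a genuinely different route. The paper proves the ``if'' direction probabilistically: after fixing a measurable selector (via Bene\v{s} \cite{Ben70}), it compares with a competing control by introducing an enlarged domain $Q\supset\bar D$, applying the It\^{o}/optional-sampling estimate to the principal eigenpair $(\hat\psi,\hat\lambda)$ of the competing generator on $Q$, and then letting $Q$ shrink to $D$ via Proposition~4.10 of \cite{QuaSi08}; the converse rests on the simplicity statement of \cite[Theorem~1.4(a)]{QuaSi08}. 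You instead argue analytically: since $\psi^{\ast}>0$ vanishes on $\partial D$ and $(\mathcal{L}_{v}+\lambda^{\ast})\psi^{\ast}\le 0$ for every Markov $v$, the supersolution (Collatz--Wielandt/Berestycki--Nirenberg--Varadhan) characterization gives $\lambda_{v}\ge\lambda^{\ast}$ directly, and Proposition~\ref{P-1} converts this into the exit-rate comparison; this avoids the auxiliary domain and the limiting argument, at the price of needing that characterization for the degenerate operator, and your direct application of \cite{QuaSi08} to the Bellman operator is arguably closer to its intended use than the paper's appeal to the linear problem \eqref{Eq-7}. Two cautions: your claim that equality forces a.e.\ saturation of the pointwise inequality is not automatic from the characterization alone -- it is precisely the simplicity/uniqueness of the positive principal eigenfunction, i.e.\ what the paper cites from \cite[Theorem~1.4]{QuaSi08} -- so make that appeal explicit; and the obstruction you flag (uniform ellipticity in \cite{QuaSi08} versus the degeneracy of \eqref{Eq2.8}) is real and silently ignored by the paper, while your regularization-plus-hypoellipticity workaround is only a sketch (uniform positive lower bound on $\lambda^{\ast}_{\varepsilon}$, strict positivity of the limit eigenfunction, and recovery of the boundary condition all need care). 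Note finally that with $u$ unconstrained in $\mathbb{R}$ the maximum in \eqref{Eq-11} is finite only where $D_{x_1}\psi^{\ast}=0$; this defect is inherited from the paper's formulation, not introduced by you.
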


\begin{proof}
The first claim for $\psi^{\ast} \in W_{loc}^{2,p} \bigl(D\bigr) \cap C\bigl(\bar{D}\bigr)$, with $p>2$, follows from Equation~\eqref{Eq-7} (cf. \cite[Theorems~1.1, 1.2 and 1.4]{QuaSi08}). Notice that if $v^{\ast}$ is measurable selector of $\argmax \bigl\{\mathcal{L}_{u} \psi^{\ast} \bigl(\mathbf{x}, \,\cdot \,\bigr) \bigr\}$, with $\mathbf{x} \in D$ (e.g., see also \cite{Ben70} on the measurable selection theorem based on specified information about the state trajectories). Then, by the uniqueness claim for eigenvalue problem in Equation~\eqref{Eq-7}, we have
\begin{align*}
\lambda_{v^{\ast}} = - \limsup_{t \rightarrow \infty} \frac{1} {t} \log \mathbb{P}_{\mathbf{x}} \Bigl\{\tau_{D} > t \Bigr\}, 
\end{align*}
where the the probability $\mathbb{P}_{\mathbf{x}} \bigl\{\cdot\}$ is conditioned with respect to $\mathbf{x}$ and $v^{\ast}$. Then, for any other admissible control $u$, we have
\begin{align*}
\mathcal{L}_{u} \psi^{\ast} \bigl(\mathbf{x}, u \bigr) + \lambda_{v^{\ast}} \psi^{\ast}\bigl(\mathbf{x}\bigr) \le 0, \quad \forall t \ge 0.
\end{align*}
Let $Q \subset \mathbb{R}^3$ be a smooth bounded open domain containing $\bar{D}$. Let $\hat{\psi}$ and  $\hat{\lambda}$ be the principal eigenfunction-eigenvalue pairs for the eigenvalue problem of $\mathcal{L}_{u}$ on $\partial Q$. 

Let 
\begin{align*}
\tau_{Q} = \inf \Bigl\{ t > 0 \, \bigl\vert \, \mathbf{X}_{0,\mathbf{x}}^{u}(t) \in \partial Q \Bigr\}.
\end{align*}
Then, under $u$, we have  
\begin{align*}
& \hat{\psi}\bigl(\mathbf{x}\bigr)\ge \mathbb{E}_{\mathbf{x}} \Biggl\{\exp\bigl( \hat{\lambda} t \bigr) \,\hat{\psi} \bigl(\mathbf{X}_{0,\mathbf{x}}^{u}(t)\bigr) \mathbf{1}\Bigl\{\tau_{Q} > t\Bigr\} \Biggr\} \\
  &\quad\quad \ge  \inf_{\mathbf{y} \in D}\Bigl\vert \hat{\psi}\bigl(\mathbf{y}\bigr) \Bigr\vert  \exp\bigl(\hat{\lambda}\, t\bigr) \mathbb{P}_{\mathbf{x}}\Bigl\{\tau_{Q} > t\Bigr\}.
\end{align*}
Leading to
\begin{align*}
\hat{\lambda} \le - \limsup_{t \rightarrow \infty} \frac{1} {t} \log \mathbb{P}_{\mathbf{x}} \Bigl\{\tau_{D} > t \Bigr\}.
\end{align*}
Letting $Q$ shrink to $D$ and using Proposition~4.10 of \cite{QuaSi08}, then we have $\hat{\lambda} \rightarrow \lambda_{v^{\ast}}$.Then, we have
\begin{align*}
\lambda_{v^{\ast}} = - \limsup_{t \rightarrow \infty} \frac{1} {t} \log \mathbb{P}_{\mathbf{x}} \Bigl\{\tau_{D} > t \Bigr\},
\end{align*}
which establishes the optimality of $v^{\ast}$ and the fact that $\lambda_{v^{\ast}}$ is the minimum exit rate.

Conversely, let $\hat{v}^{\ast}$ be any optimal Markov control. Then, we have
\begin{align*}
\mathcal{L}_{\hat{v}^{\ast}} \hat{\psi} \bigl(\mathbf{x}, \hat{v}^{\ast}(\mathbf{x}) \bigr) + \hat{\lambda}_{\hat{v}^{\ast}} \, \hat{\psi}\bigl(\mathbf{x} \bigr) = 0
\end{align*}
and
\begin{align*}
\mathcal{L}_{v^{\ast}} \psi^{\ast} \bigl(\mathbf{x}, \hat{v}^{\ast}(\mathbf{x}) \bigr) + \lambda_{v^{\ast}} \, \psi^{\ast} \bigl(\mathbf{x} \bigr) \le 0, \quad \forall t > 0,
\end{align*}
with $\hat{\lambda}_{\hat{v}^{\ast}} = \lambda_{v^{\ast}}$.

Furthermore, notice that $\psi^{\ast}$ is a scalar multiple of $\hat{\psi}$ and, at $\mathbf{x} \in D$ (cf. \cite[Theorem~1.4(a)]{QuaSi08}). Then, we see that $\hat{v}^{\ast}$ is also a maximizing measurable selector in Equation~\eqref{Eq-11}. This completes the proof of Proposition~\ref{P-2}.
\end{proof}

\begin{remark} \label{R2}
Note that the above proposition (which is a verification theorem) is useful for selecting the most appropriate admissible Markov control that confines the controlled-diffusion process $\mathbf{X}_{0,\mathbf{x}}^{v}(t)$ to the prescribed bounded domain $D$ for a longer duration.
\end{remark}

\section{Further remarks} \label{S4}
This paper briefly considered the problem of controlling a diffusion process pertaining to an opioid epidemic dynamical model with random perturbation so as to prevent it from leaving a given bounded open domain. Here, we specifically argued that the problem can be posed as minimizing the asymptotic exit rate (as opposed to maximizing the mean exit-time) with which the controlled-diffusion process exits from the given bounded domain and we further established a connection with a nonlinear eigenvalue problem. Moreover, we also proved a verification theorem that provides a sufficient condition for the solution of the corresponding HJB equation and minimizing admissible controls.

Here, it is worth remarking that there are many possible directions for extensions, for example, one obvious extension would be to consider a risk-sensitive version of the mean escape time criterion in the sense of Dupuis and McEneaney \cite{DupM97}, when the randomly perturbed opioid epidemic dynamics obeys a controlled-SDE with coefficients depending on a small parameter $\epsilon \ll 1$ (cf. Equation~\eqref{Eq2.15}), i.e., 
\begin{align*}
d \mathbf{X}_{0,\mathbf{x}}^{\epsilon,u}(t) = \bigl[\mathbf{F} (\mathbf{X}_{0,\mathbf{x}}^{\epsilon,u}(t)) + B u(t) \bigr]dt + \sqrt{\epsilon} B \hat{\sigma}(\mathbf{X}_{0,\mathbf{x}}^{\epsilon,u}(t)) dW(t), \,\, \mathbf{X}_{0,\mathbf{x}}^{\epsilon,u}(0) = \mathbf{x}.
\end{align*}
Note that the natural optimization criterion is to minimize the exponential function of the escape time to a critical loss threshold, i.e.,
\begin{align*}
\mathbf{E}_{\mathbf{x}}^{\epsilon} \exp \Bigl \{ -\frac{\theta \tau_{D}^{\epsilon}}{\epsilon} \Bigr \},
\end{align*}
where $\theta$ is a positive design parameter, $\tau_{D}^{\epsilon} = \inf \bigl\{ t > 0 \, \bigl\vert \, \mathbf{X}_{0,\mathbf{x}}^{\epsilon, u}(t) \in \partial D \bigr\}$, and $\mathbf{E}_{\mathbf{x}}^{\epsilon} \bigl \{\cdot\bigr\}$ denotes the expectation conditioned on $\mathbf{X}_{0,\mathbf{x}}^{\epsilon,u}(t)$. Equivalently, we can also consider maximizing the following criterion
\begin{align*}
-\epsilon \log \mathbf{E}_{\mathbf{x}}^{\epsilon} \exp \Bigl \{ -\frac{\theta \tau_{D}^{\epsilon}}{\epsilon} \Bigr \},
\end{align*}
where the risk-sensitive problem is to obtain an admissible optimal control for the following value
\begin{align*}
\max_{u \in \mathcal{U}} -\epsilon \log \mathbf{E}_{\mathbf{x}}^{\epsilon} \exp \Bigl \{ -\frac{\theta \tau_{D}^{\epsilon}}{\epsilon} \Bigr \} \quad \text{as} \quad \epsilon \rightarrow 0.
\end{align*}
which is also averse to any rapid escapes from the given bounded domain $D$. Moreover, along this direction, one could also exploit the connection between viscosity solutions and that of the theory of large deviations, where the escape time control can be posed as a stochastic differential game (e.g., see Bou\'{e} and Dupuis \cite{BouD01} for additional discussions related to time-consistency of such admissible optimal controls). 

Finally, we emphasize that obtaining qualitative information on the asymptotic exit rate (including the first-exit time from $D$ and the first-exit location on $\partial D$) for the diffusion process pertaining to an opioid epidemic dynamics with random perturbation could be useful for developing evidence-based strategies that aim at curbing opioid epidemics or assisting in interpreting outcome results from opioid-related policies. Moreover, results -- based on such qualitative information -- are more practical for characterizing typical sample paths of regular prescription opioid users, opioid addicts or the process of rehabilitation and relapsing into opioid drug uses.

\end{document}